\begin{document}
\title{\bf Effect of a crossing change on crossing number
\thanks{Research supported by NMOE.}}
\author{\sl Longting Wu,\quad Shuting Shao$^{\dag}$,\quad Shan Liu,\quad Fengchun Lei\\[5pt]
\small School of Mathematical Sciences, Dalian University of
Technology\\
\small Dalian, Liaoning, 116024, P.R.China\\
\small $^{\dag}$shaoshuting@mail.dlut.edu.cn}
\date{}
\maketitle

\begin{abstract}
The purpose of this article is to give a preliminary clarification
on the relation between crossing number and crossing change.

With a main focus on the span of $X$ polynomial, we prove that, as
our theorem claims, the crossing number of the link after crossing
change can be estimated when certain conditions are met.

At the end of the article, we give an example to demonstrate a
special case for the theorem and a counterexample to explain that
the theorem cannot be applied if the obtained link is not
alternating.
\end{abstract}

{\small{\bf Keywords} Crossing change; crossing number; Jones
polynomial.}

\section{Introduction}
When you project a knot or link $L$ in $\mathbb{R}^3$ onto a plane
and keep account of which part of $L$ goes over and which goes under
at any crossing. Then it is natural for you to asked how the link
$L$ (throughout this paper, a knot will be considered a link of one
component) is changed by a \textit{crossing change}, i.e. reversing
which goes over and which goes under at one of the crossing point.

Martin Scharlemann \cite{scharlemann} has done an expository survey
about the role that the simple operation of changing a crossing has
played in knot theory, discussing topics such as unknotting number,
Dehn surgery, sutured manifolds, 4-manifold topology, etc. Aside
from these, the nugatory crossing conjecture proposed by Xiao-Song
Lin \cite{lin} also turned out to be an important focus concerning
the issue of crossing change. But limited work has been done on the
issues of influence of crossing change to \textit{crossing number},
noted as $c(L)$ which is the least number of crossings that occur in
any projection of the link. In this article, we will mainly discuss
the change of crossing number when the situation of changing a link
by one crossing change is applied. More specifically, we prove

\theoremstyle{definition}
\newtheorem{theorem}{Theorem}[section]
\begin{theorem}
If you change a reduced and alternating projection of a connected link $L$
into a projection of alternating link $\tilde{L}$ by a crossing change, then it holds
\[c(\tilde{L})\leq c(L)-2.\]
\end{theorem}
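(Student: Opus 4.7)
The plan is to estimate $c(\tilde L)$ through the span of the Kauffman bracket $\langle\,\cdot\,\rangle$, relying on the Kauffman--Murasugi--Thistlethwaite theorem: for any connected diagram $D$ with $n$ crossings one has $\mathrm{span}\langle D\rangle\le 4n$, with equality iff $D$ is reduced alternating, and hence an alternating link $L$ satisfies $c(L)=\mathrm{span}(V_L)$. Set $n=c(L)$, let $D$ be the given reduced alternating projection of $L$, and let $D'$ denote the diagram of $\tilde L$ produced by the crossing change at some crossing $c_1$. Then $D'$ has $n$ crossings and the same connected underlying $4$-valent planar graph as $D$, but it fails to be alternating: along every strand through $c_1$ the over/under pattern is no longer alternating at $c_1$.

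The first step is to compare the all-$A$ and all-$B$ state loop counts of $D$ and $D'$. Because the crossing change at $c_1$ interchanges the $A$- and $B$-smoothings there, $\sigma_A(D')$ is obtained from $\sigma_A(D)$ by swapping the smoothing only at $c_1$, which changes the loop count by exactly $\pm 1$; the same holds for $\sigma_B$. Since the reduced alternating $D$ is both $A$- and $B$-adequate, each such single flip strictly decreases the loop count, so $|\sigma_A(D')|=|\sigma_A(D)|-1$ and $|\sigma_B(D')|=|\sigma_B(D)|-1$. Combined with $|\sigma_A(D)|+|\sigma_B(D)|=n+2$ this yields $|\sigma_A(D')|+|\sigma_B(D')|=n$, and the standard state-sum estimate gives $\mathrm{span}\langle D'\rangle \le 2n+2(|\sigma_A(D')|+|\sigma_B(D')|)-4 = 4n-4$. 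Since $\tilde L$ is alternating, Kauffman--Murasugi--Thistlethwaite yields the preliminary bound $c(\tilde L)=\mathrm{span}(V_{\tilde L})=\mathrm{span}\langle D'\rangle/4\le n-1$.

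The main obstacle is to squeeze out one more unit and arrive at the stated $c(\tilde L)\le n-2$. I would tackle this by exhibiting a cancellation at the extremal $A$-degrees of $\langle D'\rangle$: because $D'$ is non-$A$-adequate at $c_1$ (the two arcs joined by the new $A$-smoothing at $c_1$ lie in the same loop of $\sigma_A(D')$), both $\sigma_A(D')$ itself and the state obtained from it by flipping only the smoothing at $c_1$ contribute with opposite signs to the coefficient of the top monomial $A^{n+2|\sigma_A(D')|-2}$; a dual cancellation occurs at the bottom monomial via non-$B$-adequacy at $c_1$. After carefully tracking any other states that might produce contributions at those extremal degrees and checking that the net coefficient vanishes, one obtains a strict decrease of $\mathrm{span}\langle D'\rangle$, and since $\mathrm{span}\langle D'\rangle = 4\,\mathrm{span}(V_{\tilde L})$ is always divisible by $4$, any strict decrease from $4n-4$ in fact drops the span to at most $4n-8$, giving $c(\tilde L)\le n-2$. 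This final cancellation bookkeeping is where I expect the bulk of the technical work to lie.
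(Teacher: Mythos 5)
Your setup and preliminary bound are sound: using adequacy of the reduced alternating diagram $D$ and the identity $|S_A(D)|+|S_B(D)|=n+2$ to get $\mathrm{span}\langle D'\rangle\le 4n-4$, hence $c(\tilde L)\le n-1$, is correct (and the observation that the span of $X$ is divisible by $4$, so that any strict drop forces $\mathrm{span}\le 4n-8$, is a genuinely nice shortcut: you would only need \emph{one} of the two extremal coefficients to vanish). The problem is the step you yourself flagged as ``bookkeeping'': the claimed cancellation at the extremal degrees is simply false in general, not merely delicate. A state $S$ obtained from $S_A(D')$ by flipping a single crossing $c\neq c_1$ also contributes to the top potential degree whenever the two arcs of its smoothing lie on the same loop of $S_A(D')$, and this happens precisely when $c$ is parallel to $c_1$ in the $A$-state graph of $D$ (flipping $c_1$ merges the two loops that $c$ joins). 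Each such crossing contributes with the \emph{same} sign as the flip-at-$c_1$ state, so the coefficient is $\pm\bigl(1-1-(k-1)+\cdots\bigr)$ rather than $0$ when $c_1$ has $k\ge 2$ parallel edges. A concrete counterexample: take $D$ the standard $2$-crossing reduced alternating diagram of the Hopf link; the crossing change yields the $2$-component unlink, whose $X$ polynomial is $-A^2-A^{-2}$ with span $4=4n-4$, not $4n-8=0$ --- neither extremal coefficient vanishes, and the theorem survives only because the resulting link is split (so $\mathrm{span}=4c+4(n(L)-1)$). The same failure occurs whenever the changed crossing sits in a clasp.

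This is exactly why the paper's proof cannot stay inside the span argument. It decomposes $X(L_+)=A^{-8}X(L_-)+(A^{-6}-A^{-2})X(L_0)$, always gets cancellation of the leading terms at one end, and then must split into cases at the other end: when the smoothed diagram $L_0$ is reduced the leading terms cancel there too; when it is not, the paper invokes the Dasbach--Lin formula $|X_1|=e(G')-v(G')+1$ for the \emph{second} coefficient to show the needed cancellation occurs exactly when the corresponding edge of the $B$-graph has no parallel edge ($n=1$), and for the parallel-edge case ($n=2$) it abandons polynomial estimates entirely and argues directly from the geometry of the diagram that a Reidemeister~II reduction removes two crossings. To repair your proof you would need an analogue of both ingredients: a second-coefficient computation to certify the cancellation in the non-parallel case, and a separate diagrammatic (or splitness) argument for the clasp configurations where the span genuinely does not drop by $8$.
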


In Preliminaries, we are going to introduce some concepts and
theorems which are related and helpful to the proof of this theorem.
We mainly make use of the results of Kauffmann, Murasugi and
Thistlethwaite which characterize alternating links by relating the
spread of the Jones polynomial to crossing number and Dasbach, Lin
and Stoimenow's work on the coefficients of the Jones polynomial to
prove this theorem.

At the end of this article, we give a counterexample to demonstrate
that this theorem cannot be applied when $\tilde{L}$ is a
non-alternating link, and we give some suggestions on further work
in this direction.

\section{Preliminaries}
We say an \textit{alternating link} is a link which has a projection
in which over- and under-crossings alternate when you travel a
circuit around each component of the link.

Call a projection of link is \textit{reduced} if there are no
obvious removed crossings (See Figure 1).

\begin{center}
\hspace{16pt}\scalebox{0.3}{\includegraphics*[500,200]{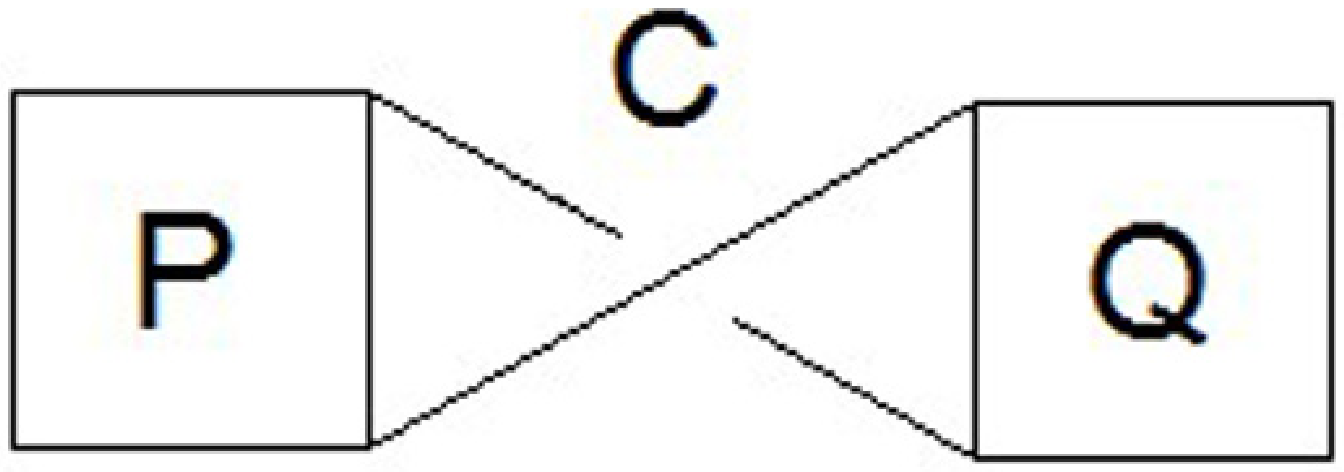}}\\
\textbf{Figure 1}
\end{center}

A link diagram $D$ is \textit{split}, if there is a closed curve not
intersecting it, but which contains parts of the diagram in both its
in- and exterior(See Figure 2).

\begin{picture}(200,90)
\put(128,15){\scalebox{0.3}{\includegraphics*[420,250]{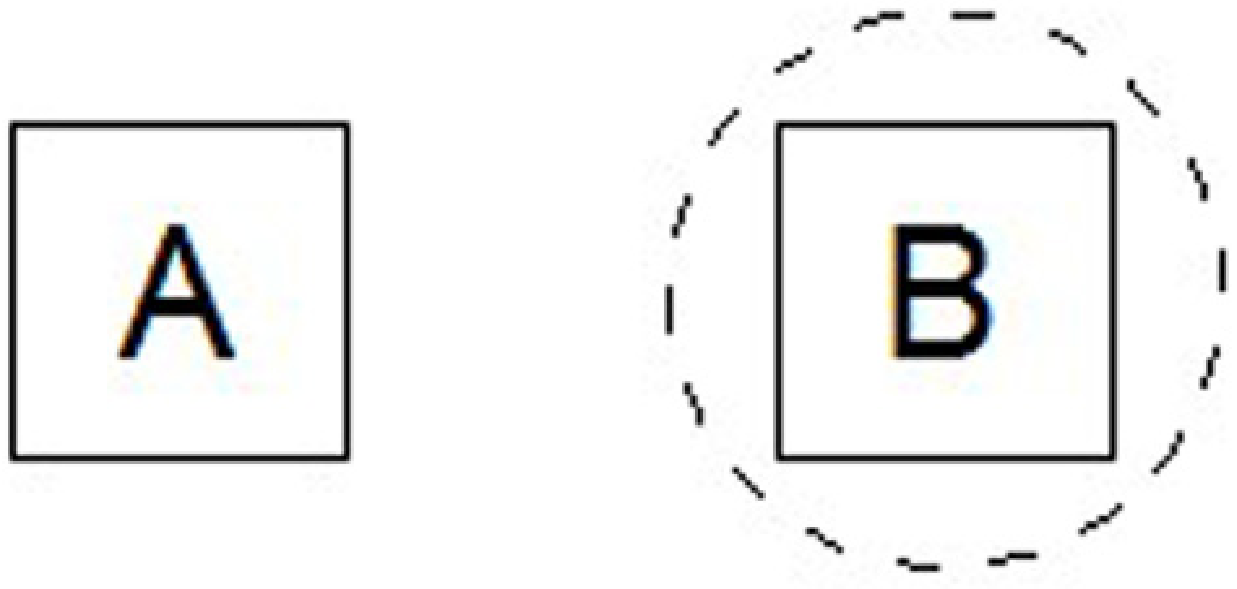}}}\hspace{30pt}\\
\put(172,5){\textbf{Figure 2}}
\end{picture}

Otherwise, $D$ is \textit{connected} or \textit{non-split}. A link
is \textit{split} if it has a split diagram, and otherwise
\textit{connected}.

Now that every concepts in the content of the main theorem is
introduced, let us introduce other background definitions and
lemmas, which is critical in the proof of the theorem.

We know that the Kauffmann bracket $<D>$ of a knot or link diagram
$D$ is a Laurent polynomial in a variable $A$, obtained by summing
over all the states $S$ terms
\[A^{a(S)-b(S)}(-A^{2}-A^{-2})^{|S|-1}\]
where a \textit{state} of $S$ is a choice of \textit{splicings} (or
\textit{splittings}) of type $A$ or $B$ for any single crossing(See
Figure 3).

\begin{picture}(250,80)
\put(80,30){\scalebox{0.65}{\includegraphics*[360,60]{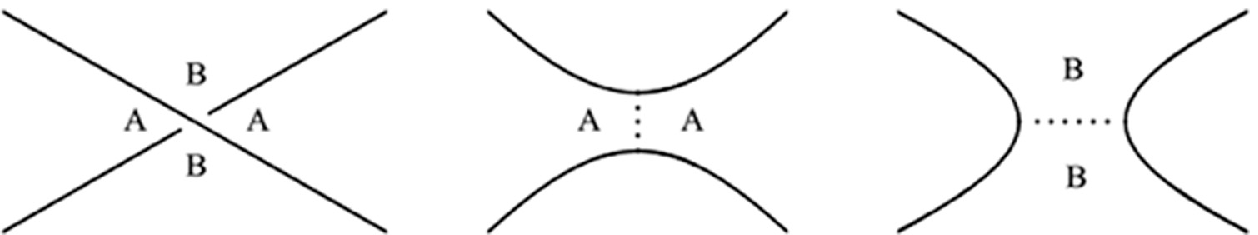}}}
\put(55,-3){\shortstack[l]{\textbf{Figure 3}: the left figure is the
A- and B-corners of a crossing, the \\center one is type A
splitting, and the right one is type B splitting.}}
\end{picture}
\vspace{11pt}

$a(S)$ and $b(S)$ denote the number of type $A$ (resp. type $B$)
splicings and $|S|$ the number of (disjoint) circles obtained after
all splicings in $S$.

Also, we know that Kauffmann bracket $<D>$ is an invariant under
Reidemeister II and III moves. If $D$ is oriented, assign a value of
$+1$ or $-1$ to each crossing according to the usual right-hand
rule(See Figure 4).

\begin{center}
\hspace{10pt}\scalebox{0.3}{\includegraphics*[400,260]{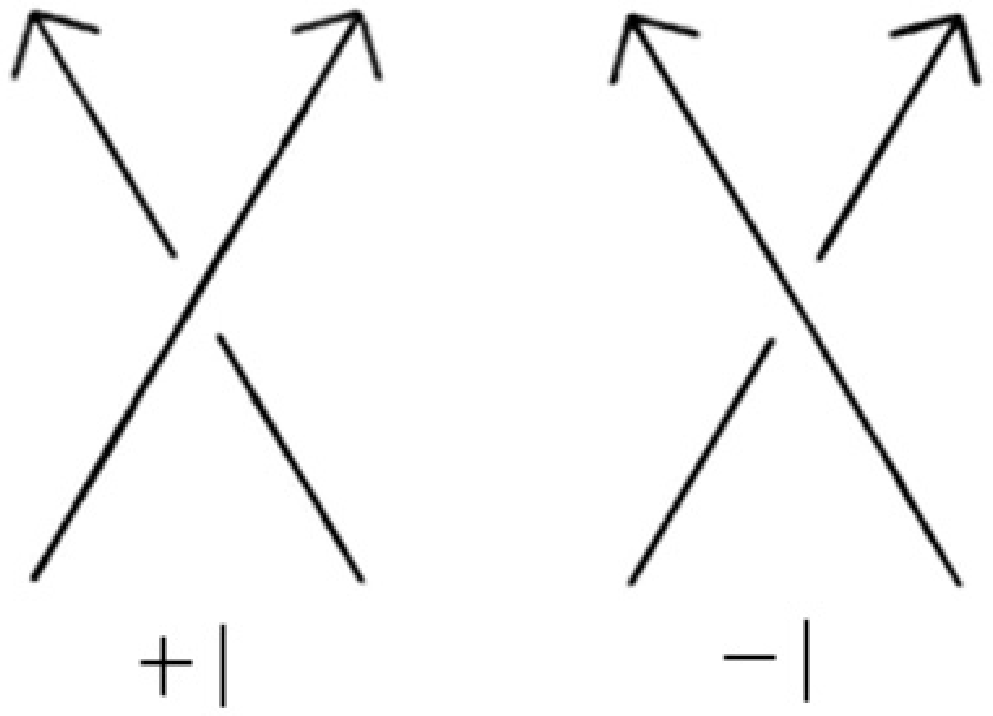}}\\
\textbf{Figure 4}
\end{center}

Define the \textit{writhe} of $D$, $\omega(D)$, to be the sum of
these values. So we can get the \textit{X polynomial}
\begin{equation}
X(L)=\sum_S (-A^3)^{-\omega(D)} A^{a(S)-b(S)}(-A^{2}-A^{-2})^{|S|-1}
\end{equation}
and Jones polynomial
\[V_L(t)=X(L)|_{A=t^{-1/4}}.\]

As we know, Jones polynomial satisfies the original skein relation
\cite{jones}
\[t^{-1}V(L_+)-tV(L_-)+(t^{-1/2}-t^{1/2})V(L_0)=0\]
where $L_+$, $L_-$, $L_0$ be three oriented link projections that
are identical except where they appear as in Figure 5.

\begin{center}
\hspace{25pt}\scalebox{0.4}{\includegraphics*[460,200]{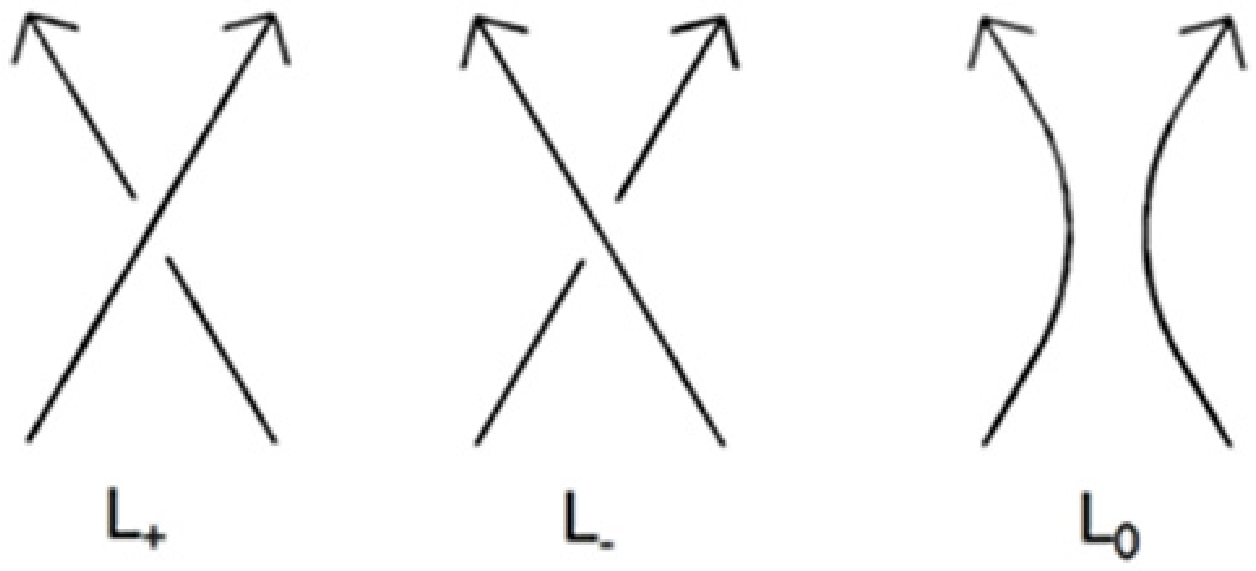}}\\
\textbf{Figure 5}
\end{center}

So that, $X(L)$ satisfies
\begin{equation}
A^4X(L_+)-A^{-4}X(L_-)+(A^2-A^{-2})X(L_0)=0.
\end{equation}

Let \textit{span}($L$) denote the difference
\[\mbox{span}(L)=\mbox{maxdeg}\ X(L)-\mbox{mindeg}\ X(L).\]

\newtheorem{definition}{Definition}[section]

And let
\[X(L)=a_0A^{l}+a_1A^{l+4}+\cdots+a_mA^{l+4m}\]
with $a_0\neq0$ and $a_m\neq0$ is the X polynomial of a knot or link
$L$. Through out the paper, we will notation $X_i=a_i$ and
$\widehat{X}_i=a_{m-i}$ for the $(i+1)$-st or $(i+1)$-last
coefficient of $X(L)$.

We then have
\newtheorem{lemma}[definition]{Lemma}
\begin{lemma} \cite{Mur} \cite{Ka}   \cite{thist87}
For a connected and alternating link $L$, it holds
\begin{enumerate}
\item $X_0,\,\widehat{X}_0=\pm 1$.

\item The signs of the coefficients in $X(L)$ are alternating.

\item
\begin{equation}
\mbox{span}(L)=4c(L).
\end{equation}

\item If L is a non-connected and alternating link with $n(L)$
components, then
\[\mbox{span}(L)=4c(L)+4(n(L)-1).\]
\end{enumerate}
\end{lemma}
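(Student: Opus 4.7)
The plan is to analyse the Kauffman bracket $\langle D\rangle$ of a reduced alternating projection $D$ of $L$ directly. Since $X(L)=(-A^3)^{-\omega(D)}\langle D\rangle$ is obtained from $\langle D\rangle$ by multiplication by a monomial unit, the span and the absolute values of the extremal coefficients of $X(L)$ coincide with those of $\langle D\rangle$, so it suffices to work with the bracket. The central objects will be the two extremal states $S_A$ (every crossing $A$-spliced) and $S_B$ (every crossing $B$-spliced), whose contributions to $\langle D\rangle$ are $A^{n}(-A^{2}-A^{-2})^{|S_A|-1}$ and $A^{-n}(-A^{2}-A^{-2})^{|S_B|-1}$ respectively, where $n=c(L)$.

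The first step is to observe that the contribution of an arbitrary state $S$ to $\langle D\rangle$ has its powers of $A$ confined to $[a(S)-b(S)-2(|S|-1),\,a(S)-b(S)+2(|S|-1)]$, and that flipping one splicing from type $A$ to type $B$ decreases $a(S)-b(S)$ by $2$ and changes $|S|$ by exactly $\pm 1$. Because $D$ is reduced no crossing is nugatory, and this rules out the bad case in which both flips would increase $|S|$; hence $S_A$ is the unique state attaining the top degree $n+2(|S_A|-1)$ and $S_B$ the unique state attaining the bottom degree $-n-2(|S_B|-1)$, yielding $\pm 1$ for $\widehat X_0$ and $X_0$ and establishing (1). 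For (3), I would invoke the checkerboard colouring of $D$: viewed as a $4$-valent plane graph with $V=n$ vertices and $E=2n$ edges, Euler's formula gives $F=n+2$ regions, and in an alternating diagram the circles of $S_A$ (resp.\ $S_B$) are precisely the boundaries of the white (resp.\ black) faces, so $|S_A|+|S_B|=n+2$. Subtracting the bottom degree from the top then gives
\[
\mathrm{span}\,\langle D\rangle=2n+2(|S_A|+|S_B|)-4=4n.
\]
Part (4) follows from the multiplicativity $X(L_1\sqcup L_2)=(-A^{2}-A^{-2})X(L_1)X(L_2)$ by an easy induction on the number of connected components, each contributing an extra factor of span $4$.

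The subtle part, and in my view the main obstacle, is (2): proving that every intermediate coefficient is nonzero and that consecutive signs alternate. I would attempt induction on $c(L)$ via the bracket skein $\langle D\rangle=A\langle D_A\rangle+A^{-1}\langle D_B\rangle$ at a carefully chosen crossing, combined with the degree-range analysis above to track the supports of $\langle D_A\rangle$ and $\langle D_B\rangle$ and to verify that the two resulting Laurent polynomials interleave with a matching alternating sign pattern rather than cancelling in the middle. The hazard is that $D_A$ and $D_B$ need not themselves be reduced alternating diagrams, so the induction must either replace them by equivalent reduced alternating diagrams via Reidemeister moves or be carried out under a suitably strengthened hypothesis (for instance, one covering the states that dominate each individual coefficient) that still applies to the intermediate diagrams produced by the skein.
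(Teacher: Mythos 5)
The paper does not prove this lemma at all; it is quoted from Murasugi, Kauffman and Thistlethwaite, so there is no internal proof to compare against. Judged on its own terms, your sketch reproduces the standard Kauffman state-sum argument for parts (1), (3) and (4), and those portions are essentially sound: the degree bound for a single state's contribution, the adequacy of a reduced alternating diagram (though you should invoke the alternating hypothesis, not just reducedness, when arguing that an $A\to B$ flip at any crossing of $S_A$ merges two distinct circles, and you still need the monotonicity of the top degree along chains of flips to rule out states several flips away from $S_A$), and the checkerboard count $|S_A|+|S_B|=n+2$ are all correct. But there are two genuine gaps. First, in part (3) you only compute $\mathrm{span}\,\langle D\rangle=4n$ for the given reduced alternating diagram $D$ with $n$ crossings; to conclude $\mathrm{span}(L)=4c(L)$ you must also show $n=c(L)$, which requires the reverse inequality $\mathrm{span}\,\langle D'\rangle\le 4c(D')$ for an \emph{arbitrary} connected diagram $D'$ of $L$, i.e. the combinatorial lemma $|S_A|+|S_B|\le c(D')+2$ for arbitrary connected diagrams. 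Your identification of state circles with checkerboard faces is special to alternating diagrams and gives no control over a hypothetical non-alternating diagram with fewer crossings, so without this dual estimate the equality $\mathrm{span}(L)=4c(L)$ does not follow.

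Second, part (2) is not proved. You explicitly flag the difficulty with the skein induction $\langle D\rangle=A\langle D_A\rangle+A^{-1}\langle D_B\rangle$ --- that the two contributions may cancel in intermediate degrees and that $D_A$, $D_B$ need not be reduced alternating --- but you do not overcome it, and this is precisely where the naive induction fails: it cannot even show that the intermediate coefficients are nonzero. The proofs in the literature, and the mechanism the paper itself leans on in its Lemma 2.3, route the statement through the Tutte polynomial: for an alternating diagram one has $V_L(t)=\pm t^{k}\,T_{G}(-t,-1/t)$ for the checkerboard graph $G$, and the nonnegativity of the Tutte coefficients forces the alternation of signs. As it stands, part (2) is an asserted goal with an acknowledged and unresolved obstruction, so the proposal does not constitute a complete proof of the lemma.
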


Now let's consider the $2$-st or $2$-last coefficient of $X(L)$.

Define $S_A$ to be the $A$-state (resp. $B$-state) the state in which all crossings are $A$-spliced,
and $S_B$ is defined analogously.

\begin{definition}\cite{sto}
{\upshape \textit{$A$-graph} $G(A(D))=G(S_A)$ of $D$ is defined as
the planar graph with vertices given by loops in the $A$-state of
$D$, and edges given by crossings of $D$. The analogous terminology
is set up also for the \textit{$B$-graph} $G(B(D))$\,(See Figure
6).}
\end{definition}

\begin{picture}(300,120)
\put(50,0){\scalebox{0.35}{\includegraphics*[800,320]{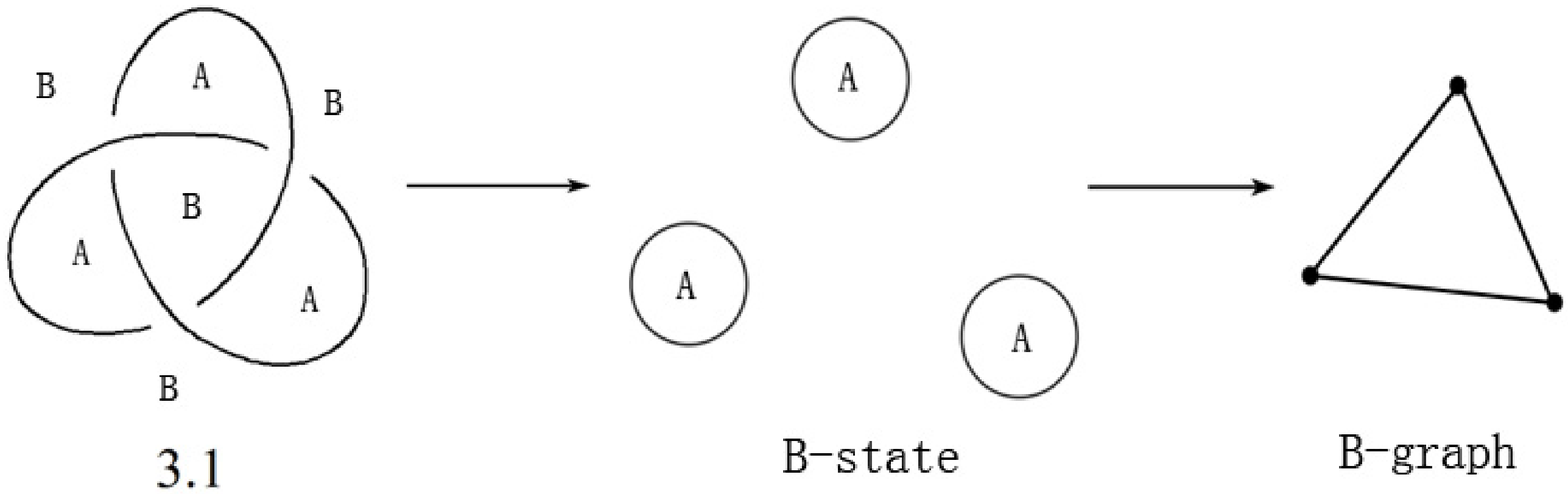}}}
\put(170,5){\textbf{Figure 6}}
\end{picture}

Let $v(G)$ and $e(G)$ be the number of vertices and edges of a graph $G$. Let $G'$ be $G$ with multiple edges removed (so
that a simple edge remains). We call $G'$ the \textit{reduction} of $G$.

Now we have
\begin{lemma} \cite{DL}
If $D$ is an alternating projection of an alternating link $L$, then we have
\[\begin{array}{lll}
|\widehat{X}_1| & = & e(G(A(D))')-v(G(A(D))')+1\\
        |{X}_1| & = & e(G(B(D))')-v(G(B(D))')+1.
  \end{array}\]
\end{lemma}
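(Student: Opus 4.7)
The plan is to extract the absolute value of the second-from-top coefficient of the Kauffman bracket $\langle D\rangle$ directly from the state-sum expression in (1) and then pass to $X(L)$; since the writhe factor $(-A^3)^{-\omega(D)}$ in (1) is a single monomial in $A$, it preserves the absolute values of all coefficients. It therefore suffices to identify $\pm|\widehat{X}_1|$ as a specific coefficient of $\langle D\rangle$ and rewrite the answer in terms of $G(A(D))'$.

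By the Kauffman--Murasugi--Thistlethwaite analysis underlying Lemma~2.1, for a reduced alternating diagram with $n$ crossings the top $A$-degree of $\langle D\rangle$ is $n + 2(s_A - 1)$ where $s_A = |S_A| = v(G(A(D)))$, and this maximum is attained uniquely by $S_A$, with leading coefficient $(-1)^{s_A - 1}$. The second-highest $A$-degree is thus $n + 2s_A - 6$. Parsing (1), a state $S$ obtained by switching $k$ crossings from A- to B-type contributes the term $(-1)^{|S|-1}\binom{|S|-1}{j}A^{n-2k+2(|S|-1)-4j}$, and this exponent matches $n + 2s_A - 6$ exactly when $|S| = s_A + k + 2j - 2$. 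Two classes of contributions then arise: the $j=1$ term of $S_A$ itself, yielding $(-1)^{s_A-1}(s_A-1)$; and, for each nonempty subset $T$ of switched crossings, a weighted count controlled by the location of $T$ inside $G(A(D))$.

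The heart of the proof is a bundle-by-bundle cancellation. Partition the edges of $G(A(D))$ into their maximal multi-edge bundles (sets of parallel edges sharing the same pair of endpoints in $G(A(D))$). Within a single bundle $B$ of multiplicity $m_B$, the loop count $|S|$ depends on $|T\cap B|$ in a subtle and non-monotone way governed by the local picture of $S_A$ around $B$; but combined with the alternating-sign binomial expansion of $(-A^2-A^{-2})^{|S|-1}$, the total contribution of all nonempty $T\cap B$ (with crossings outside $B$ kept $A$-spliced) to the coefficient of $A^{n+2s_A-6}$ telescopes to $(-1)^{s_A-2}$, independent of $m_B$. Summing over all bundles---of which there are exactly $e(G(A(D))')$---and adding the $j=1$ contribution of $S_A$ gives the second-top bracket coefficient as
\[
(-1)^{s_A-2}\bigl(e(G(A(D))') - (s_A-1)\bigr) = (-1)^{s_A-2}\bigl(e(G(A(D))') - v(G(A(D))') + 1\bigr).
\]
Taking absolute values yields $|\widehat{X}_1| = e(G(A(D))') - v(G(A(D))') + 1$. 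The formula for $|X_1|$ follows from the same argument applied after the involution $A \leftrightarrow A^{-1}$, which swaps the $A$- and $B$-states and hence $G(A(D))$ and $G(B(D))$.

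The main obstacle will be verifying the bundle telescoping. The loop count $|S|$ moves non-monotonically as $|T\cap B|$ grows---for instance, in the standard trefoil diagram the sole bundle of three parallel edges produces loop counts $2,1,2,3$ as $|T|$ runs from $0$ to $3$---so one must check that the alternating binomial weights in $(-A^2-A^{-2})^{|S|-1}$ collapse the multiplicity within each bundle down to a single $\pm 1$ contribution. The rest of the argument is routine bookkeeping, riding on the unique maximality of $S_A$ at the leading degree and the sign-alternation built into the Kauffman state sum.
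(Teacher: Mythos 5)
Your outline is correct in substance, but it takes a genuinely different route from the paper's proof. You compute the second-from-top coefficient directly from the Kauffman state sum (1): you isolate the states that can reach degree $n+2|S_A|-6$, note that a state obtained from $S_A$ by switching a nonempty set $T$ of crossings reaches it only when $|S|=|S_A|+|T|-2$, argue that this forces $T$ to lie inside a single parallel bundle of $G(A(D))$, and then collapse each bundle to a single contribution $(-1)^{|S_A|}$ via the identity $\sum_{k\geq 1}(-1)^{k-1}\frac{m!}{k!(m-k)!}=1$; adding the $j=1$ term of $S_A$ gives $(-1)^{|S_A|}\bigl(e(G(A(D))')-v(G(A(D))')+1\bigr)$, exactly as you state. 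This is essentially the original Dasbach--Lin/Stoimenow state-sum computation. The paper instead quotes Bollob\'as's identity $V_L(t)=(-1)^{\omega(D)}t^{(|S_A|-|S_B|+3\omega(D))/4}T_{G(B(D))}(-t,-1/t)$ together with the Lickorish--Thistlethwaite expansion of $T_G(-t,-1/t)$ over subsets $\tilde F$ of the \emph{reduced} edge set $\tilde E$; there the passage to $G'$ is already encoded in the $P(\mu(e))$ factors, so the second coefficient $(-1)^{|V|-1}(|V|-1-|\tilde E|)$ falls out immediately from the terms with $|\tilde F|\leq 1$. Your route is more elementary and self-contained, at the price of the combinatorial verification you flag, which has two halves: (i) every nonempty subset of a single bundle satisfies $|S|=|S_A|+|T|-2$ (planarity forces the crossings of a bundle to be parallel bands joining two state circles, so after the first switch merges them every further switch splits a circle), and (ii) any $T$ meeting two distinct bundles has $|S|\leq |S_A|+|T|-4$ and hence contributes nothing --- without (ii) your bundle-by-bundle partition of the sum is not justified. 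Note also that your argument, like the paper's, tacitly requires $D$ to be reduced so that $S_A$ and $S_B$ are the unique states realizing the extreme degrees; this matters because the paper later applies the lemma to the possibly non-reduced diagram $L_0$.
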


\begin{proof}
With the purpose of according with proof of the main result in this
article, we will only give the full version of proof for the second
equation here and the first equation can be proved similarly.

For $V_L(t)=X(L)|_{A=t^{-1/4}}$, we only need to consider the
coefficient of the second highest degree term of $V_L(t)$. It is
well known, e.g. \cite{bol}, that the Jones Polynomial of an
alternating link $L$ satisfies:
\[V_L(t)=(-1)^{\omega(D)}t^{(|S_A|-|S_B|+3\omega(D))/4}T_{G(B(D))}(-t,-1/t).\]

Here $T_{G(B(D))}(x,y)$ is the Tutte polynomial of $G(B(D))$. Then,
we only need to consider the coefficient of the second highest
degree term of $T_{G(B(D))}(-t,-1/t)$.

In \cite{thist}, we know that
\[T_{G}(-t,-1/t)=\sum_{\tilde{F}\subseteq \tilde{E}}\left((-t-1)^{k(\tilde{F})-1}\left(-\frac{1}{t}-1\right)^{|\tilde{F}|-|V|
+k(\tilde{F})} \prod_{e\in \tilde{F}}P(\mu(e))\right).\]

Here $G=G(B(D))=(V,E)$ and $G(B(D))'=(V,\tilde{E})$. While
$k(\tilde{F})$ is the number of components of the graph
$(V,\tilde{F})$, $\mu(e)$ is the number of edges in $G$ that are
parallel to $e$ and $P(m)$ is defined as
\[P(m)\quad :=\quad 1-t^{-1}+t^{-2}-\cdots \pm t^{-m+1}.\]

Thus, we know that the coefficient of the second highest degree term
of $T_{G(B(D))}(-t,-1/t)$ is $(-1)^{|V|-1}(|V|-1- |\tilde{E}|)$.
Then the absolute value of the coefficient of the second highest
degree term of $V_L(t)$ is $|\tilde{E}|-|V|+1$.

Then after substitution of notations, we have
\[|{X}_1|  =  e(G(B(D))')-v(G(B(D))')+1.\]
\end{proof}

\section{Proof of the main result}
With all the background knowledge introduced above, let us go back
to Theorem 1.1 mentioned in the beginning of the article.

\setcounter{section}{1}
\begin{theorem}
If you change a reduced and alternating projection of a connected
link $L$ into a projection of alternating link $\tilde{L}$ by a
crossing change, then it holds
\[c(\tilde{L})\leq c(L)-2.\]
\end{theorem}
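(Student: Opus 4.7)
My plan is to control $\mbox{span}(\tilde{L})$ through the $X$-polynomial skein relation (2), then invoke Lemma 2.1 to convert the span bound into the desired crossing-number inequality. Orient $L$ so the changed crossing is positive in $D$ (the opposite sign is symmetric). Taking $D$ as $L_{+}$ and $\tilde{D}$ as $L_{-}$, (2) rearranges to
\[
X(\tilde{L}) \;=\; A^{8}\,X(L) + (A^{6}-A^{2})\,X(L_{0}),
\]
where $L_{0}$ is carried by the oriented smoothing $D_{0}$, a diagram with $c(L)-1$ crossings.

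The setup facts I would record are: from Lemma 2.1 applied to $L$, $\mbox{span}(L)=4c(L)$ with $X_{0}(L),\widehat{X}_{0}(L)=\pm 1$ and alternating signs in between; and from a direct Kauffman state-count together with the bound $|S_{A}|+|S_{B}|\le n+2$ (saturated at $D$ since $D$ is reduced alternating), one obtains $|S_{A}(D_{0})|=|S_{A}(D)|$ and $|S_{B}(D_{0})|=|S_{B}(D)|-1$, taking the oriented smoothing at the positive crossing to be the $A$-smoothing; the $B$-case is parallel. Combined with the writhe shift $\omega(D_{0})=\omega(D)-1$, these yield $\mbox{maxdeg}\,X(L_{0})\le\mbox{maxdeg}\,X(L)+2$, with equality iff $D_{0}$ is $A$-adequate (automatic since $D$ is reduced alternating), and $\mbox{mindeg}\,X(L_{0})\ge\mbox{mindeg}\,X(L)+6$, with equality iff $D_{0}$ is $B$-adequate.

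The core step is then to show $\mbox{span}(\tilde{L})\le 4c(L)-8$. At the top degree $M_{L}+8$ of $X(\tilde{L})$, the $A^{8}X(L)$ contribution is $\widehat{X}_{0}(L)=\pm 1$ and the $A^{6}X(L_{0})$ contribution is $\widehat{X}_{0}(L_{0})=\pm 1$; the alternating sign pattern of Lemma 2.1(2) applied to both $L$ and $L_{0}$ should force opposite signs, producing cancellation. The bottom degree $m_{L}+8$ splits into cases: if $D_{0}$ is $B$-adequate, the argument is symmetric to the top; otherwise already $m_{0}>m_{L}+6$, pushing $\mbox{mindeg}\,X(\tilde{L})$ up by an extra four and absorbing the required slack directly. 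The next-to-extremal coefficients needed to propagate the cancellation an additional step (so that span drops by a full $8$ rather than $4$) are identified via Lemma 2.3 as cycle ranks of the reduced graphs $G(A(D))'$ and $G(B(D))'$, which encode the precise interaction between consecutive coefficients of $X(L)$ and $X(L_{0})$.

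Once $\mbox{span}(\tilde{L})\le 4c(L)-8$ is in hand, Lemma 2.1 (part 3 if $\tilde{L}$ is non-split, part 4 otherwise) yields $4c(\tilde{L})+4(n(\tilde{L})-1)=\mbox{span}(\tilde{L})\le 4c(L)-8$, so
\[ c(\tilde{L}) \;\le\; c(L)-2-(n(\tilde{L})-1) \;\le\; c(L)-2. \]
The main obstacle I anticipate is the signed cancellation at the two extremal degrees: the signs must be tracked through the writhe shift and the $A$-versus-$B$ choice at the smoothing, and the subcase in which $D_{0}$ fails to be $B$-adequate must be handled separately by the direct mindeg improvement noted above.
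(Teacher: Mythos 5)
Your overall architecture coincides with the paper's: resolve the changed crossing via the skein relation for $X$, show that the extremal coefficients of the two summands cancel so that $\mbox{span}(\tilde L)\le \mbox{span}(L)-8$, and convert to crossing numbers with Lemma 2.1(3)--(4). Two of your steps, however, conceal real difficulties. First, the cancellation of the leading $\pm1$'s does not follow from the alternating-sign statement of Lemma 2.1(2) alone, since that controls signs within each of $X(L)$ and $X(L_0)$ but not the relative sign between them; the paper pins this down by computing the extreme coefficient explicitly from Kauffman's state formula, $X_0^{-}=(-1)^{-\omega(L_-)+|S_B^-|-1}$, and comparing writhes and state-circle counts (its equation (6)). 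That part is fillable, but it is a computation you still owe.

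The serious gap is in the case where the smoothed diagram fails to be adequate on one side (equivalently, $L_0$ is not reduced), where you assert that the ``next-to-extremal coefficients \dots identified via Lemma 2.3'' will ``propagate the cancellation an additional step.'' They do not always. By Lemma 2.3 the second coefficients are $e(G')-v(G')+1$ for the reduced checkerboard graphs, and deleting the edge $e_0$ corresponding to the smoothed crossing lowers $e(G')$ by one only if $e_0$ is a simple edge, i.e.\ the number $n$ of edges joining its endpoints is $1$. If $e_0$ has exactly one parallel companion ($n=2$), the reduced graph loses no edge, the second coefficients of the two summands differ by the wrong amount, the degree-$(m_--4)$ coefficient of $X(L_+)$ is $\pm1$ rather than $0$, and the span argument cannot deliver the extra drop of $4$. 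At this point the paper abandons the polynomial and argues diagrammatically (its Figures 7--9) that the $n=2$ configuration forces $L_-$ to contain a clasp-like tangle on which the crossing change visibly eliminates two crossings, giving $c(L_+)\le c(L_-)-2$ directly; for $n>2$ it shows the non-reduced case cannot occur at all. Your proposal has no substitute for this diagrammatic step, so as written the argument fails precisely in the $n=2$ configuration.
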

\begin{proof}
Any given orientation of each component of $L$, we denote by $P$ a
reduced and alternating projection of the oriented link $L$. And we
suppose the value of the crossing point we choose is $-1$ (See
Figure 4). Meanwhile, let us assume that $L_-=P$. Then it can be
easily to see that $L_+$ is a projection of the $\tilde{L}$. Next,
let us start discussion on the change of span of $X$ polynomial.

First, from\,(1), we have
\[\begin{array}{lll}
         X(L_0) & = & \sum_{S_0} (-A^3)^{-\omega(L_0)} A^{a(S_0)-b(S_0)}(-A^{2}-A^{-2})^{|S_0|-1}\\
         X(L_-) & = & \sum_{S_-} (-A^3)^{-\omega(L_-)} A^{a(S_-)-b(S_-)}(-A^{2}-A^{-2})^{|S_-|-1}
    \end{array}\]
where $S_0$ and $S_-$ are statuses corresponding to $L_0$ and $L_-$.
Meanwhile, $\omega(L_0)$ and $\omega(L_-)$ are writhes of $L_0$ and
$L_-$, respectively.

Also, from\,(2), we know $X(L_+)$, $X(L_0)$ and $X(L_-)$ satisfy
\[A^4X(L_+)-A^{-4}X(L_-)+(A^2-A^{-2})X(L_0)=0.\]

Then
\begin{equation}
X(L_+)=A^{-8}X(L_-)+(A^{-6}-A^{-2})X(L_0).
\end{equation}

We denote by $m_-$ the lowest degree of $X(L_-)$ and $M_-$ the
highest degree. The corresponding coefficients of the term are
$X_0^{-}$ and $\widehat{X}_0^{-}$. The notations for $X(L_0)$
($m_0$, $M_0$, $X_0^{0}$ and $\widehat{X}_0^{0}$) can be define
analogously.

For $L_-$ is an alternating projection, it is easy to see that $L_0$
is also an alternating projection. And it is well known that
\cite{Ka}
\[\begin{array}{lll}
  m_- & = & -3\omega(L_-)-b(S_B^-)-2|S_B^-|+2\\
  m_0 & = & -3\omega(L_0)-b(S_B^0)-2|S_B^0|+2
  \end{array}\]
where $S_B^-$ and $S_B^0$ are $B$-states of $L_-$ and $L_0$,
respectively.

Then
\[\begin{array}{ll}
& \mbox{mindeg}(A^{-8}X(L_-))-\mbox{mindeg}((A^{-6}-A^{-2})X(L_0))\\
= &  -8+m_--(-6+m_0)\ =\ m_--m_0-2\\
=& 3(\omega(L_0)-\omega(L_-))+b(S_B^0)-b(S_B^-)+(|S_B^0|-|S_B^-|)-2.
\end{array}\]
And it is easy to know that
$\omega(L_-)=\omega(L_0)-1,b(S_B^-)=b(S_B^0)+1,|S_B^-|=|S_B^0|$.
Thus,
\begin{equation}
\mbox{mindeg}(A^{-8}X(L_-))=\mbox{mindeg}((A^{-6}-A^{-2})X(L_0)).
\end{equation}

And the coefficient of the lowest degree term of $A^{-8}X(L_-)$ is
$X_0^{-}$ and the coefficient of the lowest degree term of
$(A^{-6}-A^{-2})X(L_0)$ is $X_0^{0}$, then
\begin{equation}
X_0^{-}=(-1)^{(-\omega(L_-)+|S_B^-|-1)}=(-1)^{(-(\omega(L_0)-1)+|S_B^0|-1)}=-X_0^{0}.
\end{equation}

Then with\,(4),(5),(6), we have
\begin{equation}
\mbox{mindeg}(X(L_+))\geq \mbox{mindeg}(A^{-8}X(L_-))+4=m_--4.
\end{equation}

Now, let us consider the highest degree term of $X(L_+)$.

From\,(3), we have
\begin{equation}
\mbox{maxdeg}(A^{-8}X(L_-))-\mbox{mindeg}(A^{-8}X(L_-))= 4c(L_-).
\end{equation}

Also, for $L_-$ is a reduced and alternating projection of a
connected link, then $L_0$ must also be an alternating projection of
a connected link, then with\,(3)
\begin{equation}
\mbox{maxdeg}((A^{-6}-A^{-2})X(L_0))-\mbox{mindeg}((A^{-6}-A^{-2})X(L_0))=4c(L_0)+4.
\end{equation}

Therefore, by\,(5), (8), (9), we have
\begin{equation}
\mbox{maxdeg}(A^{-8}X(L_-))-\mbox{maxdeg}((A^{-6}-A^{-2})X(L_0))=4(c(L_-)-c(L_0))-4.
\end{equation}

\begin{enumerate}
\item If $L_0$ is already reduced, then $c(L_-)=c(L_0)+1$, by\,(10),
we have
\[\mbox{maxdeg}(A^{-8}X(L_-))=\mbox{maxdeg}((A^{-6}-A^{-2})X(L_0)).\]
Meanwhile, by Lemma 2.1, the coefficient of the highest degree term
of $A^{-8}X(L_-)$ is $\widehat{X}_0^{-}=X_0^{-}(-1)^{c(L_-)}$, the
coefficient of the highest degree term of $(A^{-6}-A^{-2})X(L_0)$ is
$-\widehat{X}_0^{0}=-X_0^{0}(-1)^{c(L_0)}=X_0^{0}(-1)^{c(L_-)}$.

Together with\,(4), (6), we have
\begin{equation}
\mbox{maxdeg}(X(L_+))\leq \mbox{maxdeg}(A^{-8}X(L_-))-4=M_--12.
\end{equation}

Then with\,(7), (11), we have
\[\mbox{span}(L_+)\leq \mbox{span}(L_-)-8.\]

Recall that $L_+$ is alternating link (not necessarily connected),
then by Lemma 2.1 $c(L_+)\leq \frac{1}{4}\mbox{span}(L_+$), thus
\[c(L_+)\leq \frac{1}{4}\mbox{span}(L_+)\leq \frac{1}{4}(\mbox{span}(L_-)-8)=c(L_-)-2.\]

\item If $L_0$ is not reduced, then $c(L_-)> c(L_0)+1$, by (10), we
have
\[\mbox{maxdeg}(A^{-8}X(L_-))>\mbox{maxdeg}((A^{-6}-A^{-2})X(L_0)).\]
Then with\,(4), we have
\begin{equation}
\mbox{maxdeg}(X(L_+))=M_--8.
\end{equation}

We denote the coefficient of the second lowest degree term of
$X(L_-)$ and $X(L_0)$ by $X_1^{-}$ and $X_1^{0}$, respectively.
Then, the coefficient of the second lowest degree term of
$A^{-8}X(L_-)$ and $(A^{-6}-A^{-2})X(L_0)$ are $X_1^{-}$ and $
X_1^{0}-X_0^{0}$, respectively. From\,(5), (6), we know that the sum
of the coefficient of the lowest degree terms of $A^{-8}X(L_-)$ and
$(A^{-6}-A^{-2})X(L_0)$ is $0$. Then,\,from (4), we know that the
coefficient of $(m_--4)$ degree term of $X(L_+)$ is
$X_1^{-}+X_1^{0}-X_0^{0}$.

Recall that the signs of the coefficients are alternating in Lemma
2.1, then
\[|X_1^{-}|=-X_0^{-}X_1^{-}\qquad |X_1^{0}|=-X_0^{0}X_1^{0}.\]
Together with\,(6), we have
\begin{equation}
|X_1^{-}+X_1^{0}-X_0^{0}| = ||X_1^{-}|-|X_1^{0}|-1|.
\end{equation}

Denote $B$-graphs of $L_-$ and $L_0$ by $G(B)_-$ and $G(B)_0$,
respectively. And $G(B)'_-$ and $G(B)'_0$ are reduction of
$B$-graphs of $L_-$ and $L_0$. Since $L_-$ and $L_0$ are both
alternating projections, then by Lemma 2.3, we have
\begin{eqnarray}
     |X_1^{-}| &=& e(G(B)'_-)-v(G(B)'_-)+1\\
     |X_1^{0}| &=& e(G(B)'_0)-v(G(B)'_0)+1.
\end{eqnarray}

For we obtain both $G(B)_-$ and $G(B)_0$ by $B$-splitting any single
crossing of $L_-$, then
\begin{equation}
v(G(B)'_-)=v(G(B)_-)=v(G(B)_0)=v(G(B)'_0).
\end{equation}

For we obtain $L_0$ from splitting open $c_0$ in $L_-$, and $c_0$ is
an edge $e_0$ that connects vertices $P$ and $Q$ in $G(B)_-$, then
we have $G(B)_0=G(B)_--e_0$ by definitions in Graph Theory. Let $n$
be the number of edge(s) connecting $P$ and $Q$ in $G(B)_-$. And now
let us do further discussion on $n$.

\begin{enumerate}
\item If $n=1$. \vspace{11pt}

Since $c_0$ is split open in $L_0$, then in $G(B)'_0$, there is no
edge connecting $P$ and $Q$. Meanwhile, the connections for other
vertices in $G(B)'_0$ are the same with ones of $G(B)'_-$.

Thus, $e(G(B)'_-)=e(G(B)'_0)+1$. With\,(14), (15) and (16), we have
$|X_1^{-}|=|X_1^{0}|+1$.

And with\,(13), we know that the coefficient of $(m_--4)$ degree
term of $X(L_+)$ is $0$. Thus,
\[\mbox{mindeg}(X(L_+))\geq
(m_--4)+4=m_-.\]

With\,(12), we have
\[
c(L_+)\leq \frac{1}{4}\mbox{span}(L_+)\leq
\frac{1}{4}(\mbox{span}(L_-)-8)=c(L_-)-2.
\]

\item If $n=2$ (See Figure 7).

\begin{center}
\scalebox{0.35}{\includegraphics*[270,200]{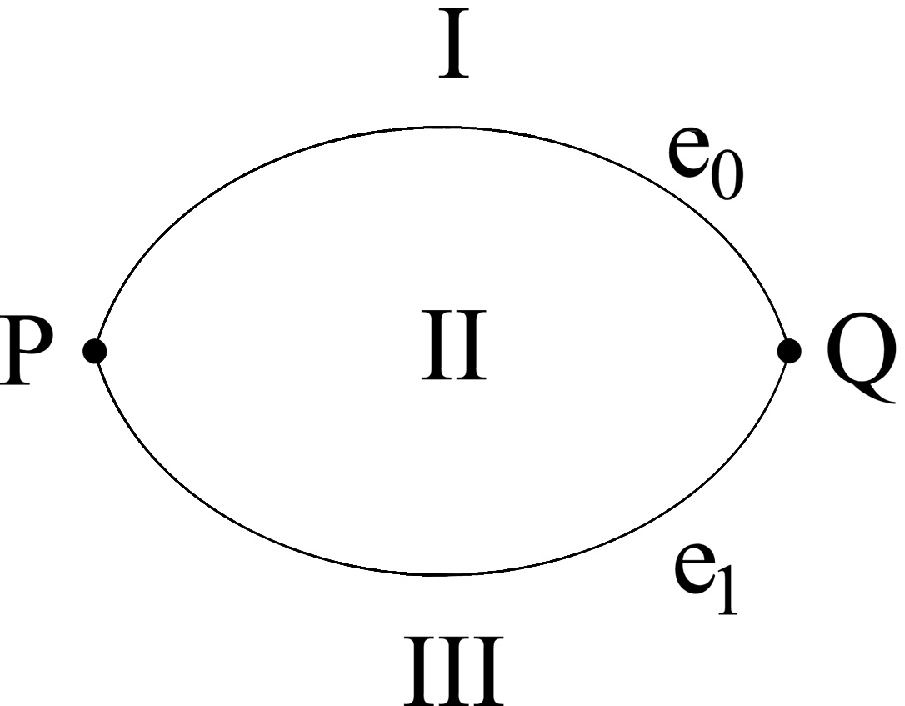}}\\
\textbf{Figure 7}
\end{center}

Since $L_0$ is not reduced, then in $G(B)_0$ exists an edge $e$,
which belongs to both region $1$ and region $2$. And these two
regions are connected (See Figure 8).

\begin{center}
\scalebox{0.4}{\includegraphics*[300,110]{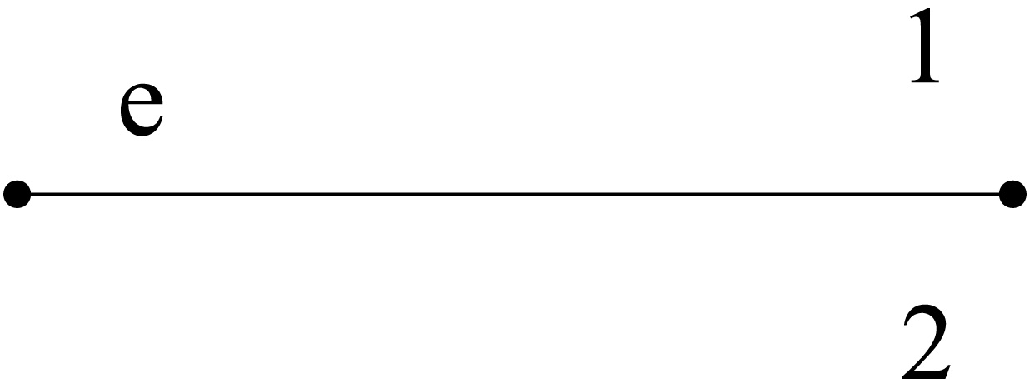}}\\
\textbf{Figure 8}
\end{center}

Since $G(B)_0=G(B)_--e_0$, $e$ can be regarded as an edge of
$G(B)_-$. Recalling that $L_-$ is reduced, then regions $1$ and $2$
are not connected in $G(B)_-$. On the other hand, from Figure 7, we
know that after splitting open $c_0$, the unconnected regions I and
II in $G(B)_-$ connect in $G(B)_0$. Meanwhile, the connectedness of
other regions in $G(B)_-$ maintains in $G(B)_0$. Thus, region $1$
and region $2$ must be I and II and $e$ must be an edge of region II
in $G(B)_-$. And with Figure 7, we know that the only edges belonged
to II in $G(B)_-$ are $e_0$ and $e_1$. However, there is no $e_0$ in
$G(B)_0$. Thus, $e=e_1$. With $e_1$ being an edge of region III in
$G(B)_-$, there must be $\mbox{I}=\mbox{III}$, namely I and III is
connected in $G(B)_-$. Then, $L_-$ must be like what is shown below
(See Figure 9).

\begin{center}
\scalebox{0.4}{\includegraphics*[450,170]{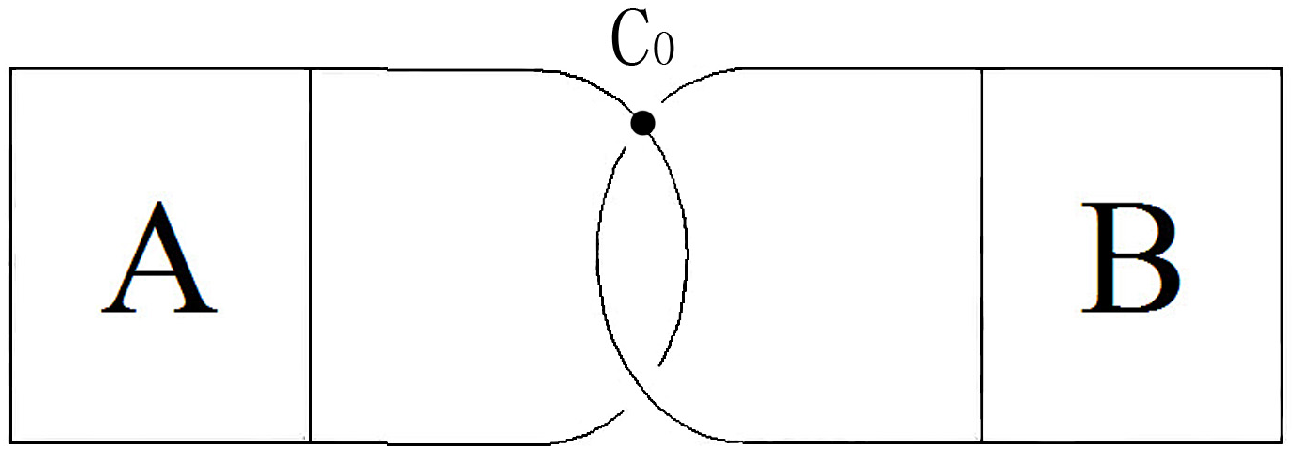}}\\
\textbf{Figure 9}
\end{center}

With a crossing change to $c_0$, $L_-$ generates into $L_+$. And by
Figure 9, it is easily to know that $c(L_+)\leq c(L_-)-2$.

\item If $n>2$, See Figure 10.

\begin{center}
\scalebox{0.4}{\includegraphics*[280,200]{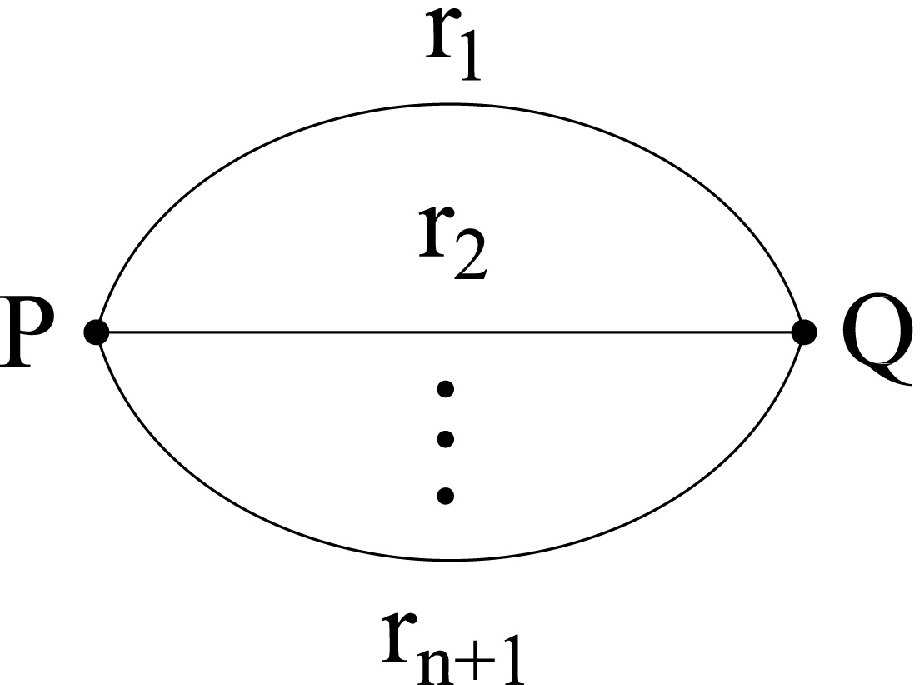}}\\
\textbf{Figure 10}
\end{center}

Then we say that $L_0$ must be reduced.

In fact, if $L_0$ is not reduced, then based on the inference of the
situation of $n=2$, we can prove that there are 2 regions, $r_i$ and
$r_{i+2}$, in Figure 10 that are connected, but it can be easily see
from Figure 10 that it is impossible
 when $n>2$~! Thus, the situation of $n>2$ could be boiled down to
case 1.

\end{enumerate}
\end{enumerate}
\end{proof}

\setcounter{section}{3}
\section{Examples and future work}
As a matter of fact, we have verified the theorem to all the
suitable knot projections with crossing number less than 11.

Next, we will demonstrate that the situation of
$c(\tilde{L})=c(L)-2$ in Theorem 1.1 exists. See
\newtheorem{example}{Example}[section]

\begin{example}
In Figure 11, we reverse which goes over and which goes under at
crossing $C$ of a reduced and alternating projection of knot 5.1.
Then, we obtain
 a projection of alternating
knot 3.1. And $c(3.1)=c(5.1)-2$.
\end{example}

\begin{picture}(320,145)
\put(65,20){\scalebox{0.3}[0.3]{\includegraphics*[800,430]{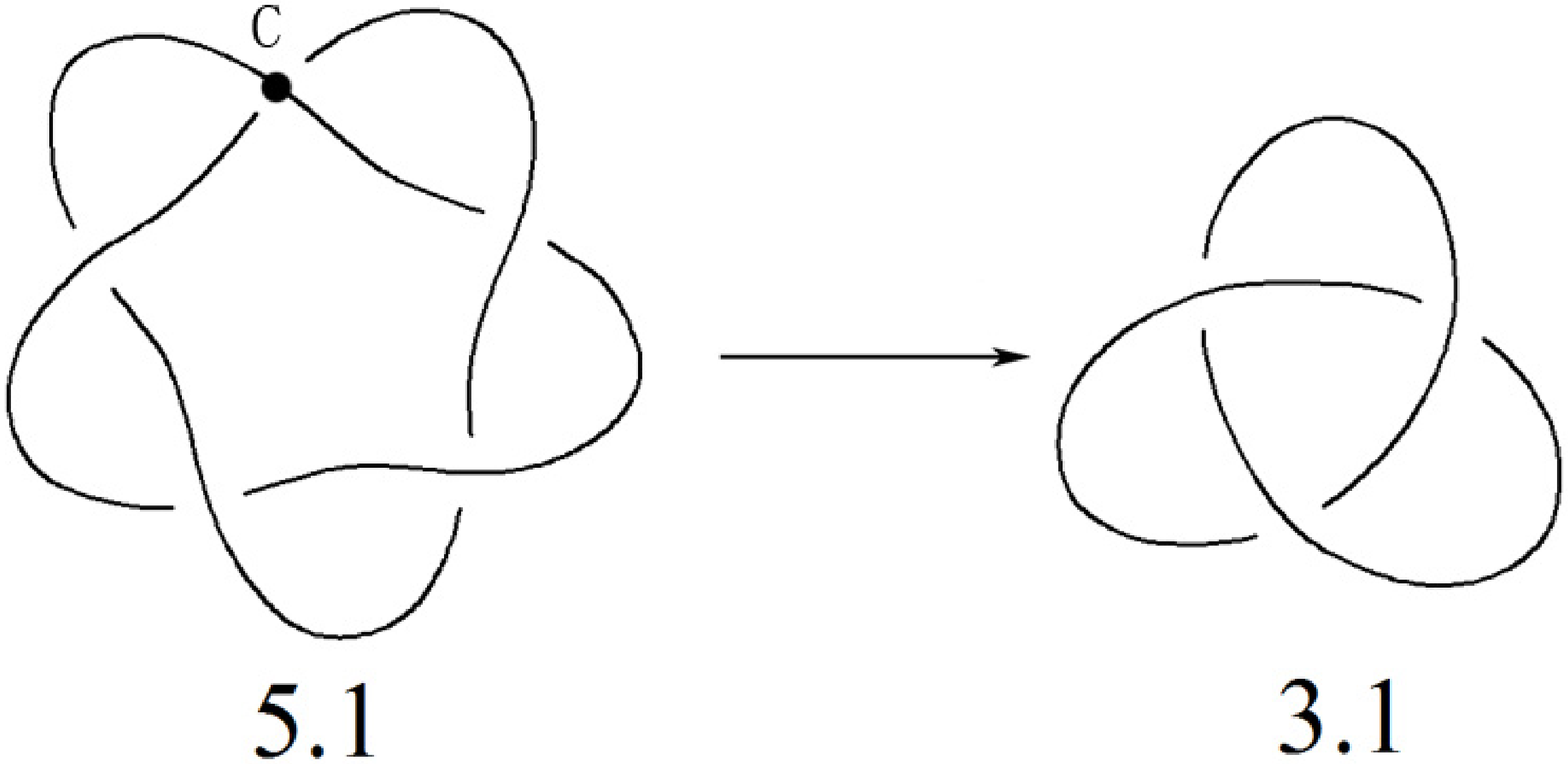}}}
\put(165,5){\textbf{Figure 11}}
\end{picture}

However, it is notable that the condition that obtaining a
projection of alternating link after changing the link in Theorem
1.1 is necessary. In fact, if we obtain a projection of
non-alternating link, the conclusion of Theorem 1.1 cannot be
established.

\begin{example}
We choose a reduced and alternating projection of knot 10.70 and
reverse which goes over and which goes under at crossing $C$, then
we get a projection of non-alternating knot 9.42. As we can see,
$c(9.42)=c(10.70)-1$.
\end{example}

\begin{picture}(320,140)
\put(70,20){\scalebox{1.28}{\includegraphics*[200,100]{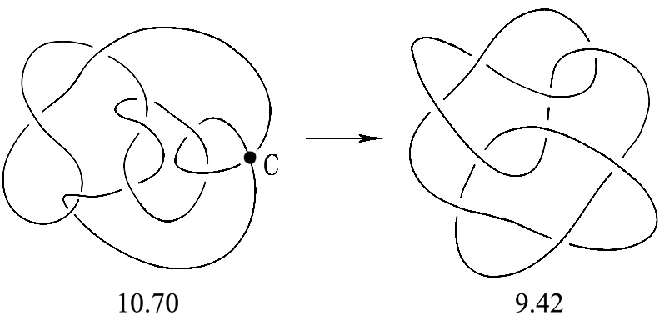}}}
\put(165,5){\textbf{Figure 12}}
\end{picture}

However, we do not know whether there exists a reduced and
alternating projection of a connected link, which can maintains its
crossing number after a crossing change. We are aiming at finding
more examples and broadening our research range to non-alternating
link in the future, so that we can explore more essential characters
of the change of crossing number under the influence of a crossing
change.

\end{document}